\newtheorem{theorem}{Theorem}[section]
\newtheorem{lemma}[theorem]{Lemma}
\newtheorem{proposition}[theorem]{Proposition}
\newtheorem{corollary}[theorem]{Corollary}
\theoremstyle{definition}
\newtheorem{definition}[theorem]{Definition}
\theoremstyle{remark}
\newtheorem{remark}[theorem]{Remark}
\numberwithin{equation}{section}
\title{Excellence in  Prime Characteristic}
\author{Rankeya Datta}
\address{Department of Mathematics, University of Michigan, 530 Church Street, Ann Arbor, MI 48109}
\email{rankeya@umich.edu}
\thanks{{The first author was partially supported by a Department of Mathematics Graduate Fellowship at the University of Michigan, and the second author's NSF grant DMS-1501625.}}
\author{Karen E. Smith}
\address{Department of Mathematics, University of Michigan, 530 Church Street, Ann Arbor, MI 48109}
\email{kesmith@umich.edu}
\thanks{{The second author was partially supported by NSF grant DMS-1501625.}}
\begin{document}

\maketitle

\centerline{\it{To Professor Lawrence Ein on the occasion of his sixtieth birthday.}}

\begin{abstract}  Fix  any field $K$ of characteristic $p$ such that $[K:K^p]$ is finite.
We discuss excellence for domains whose fraction field is $K$, showing for example, that $R$ is excellent if and only if the Frobenius map is finite on $R$. 
Furthermore, we show $R$ is excellent if and only if it admits  some 
non-zero $p^{-e}$-linear map for $R$ (in the language of \cite{BlicBock}), or equivalently, that the Frobenius map $R \rightarrow F_*R$ defines  a {\bf solid} $R$-algebra structure on $F_*R$  (in the language of \cite{HochSolid}).
In particular, this means that generically $F$-finite, Frobenius split Noetherian domains are always excellent.
We also show that non-excellent rings are abundant and easy to construct {in prime characteristic}, even within the world of regular local rings of dimension one inside function fields.
\end{abstract}

\section{Introduction}

The notion of {\bf excellence} for a commutative ring was introduced by Grothe-
-ndieck. A Noetherian ring is excellent, essentially, if the ring satisfies a list of axioms that ensures it  behaves much like a finitely generated algebra over a field; see Definition \ref{definition of excellence}.  An arbitrary Noetherian ring can be quite pathological, but the class of excellent rings is supposed to be the most  general setting to which one can expect the deep ideas of algebraic geometry, such as resolution of singularities, to  extend.

 Although a common hypothesis in the literature,  excellent rings are not widely understood. They  are often dismissed with sentences like  the following quoted from Wikipedia:  {\it ``Essentially all Noetherian rings that occur naturally in algebraic geometry or number theory are excellent; in fact it is quite hard to construct examples of Noetherian rings that are not excellent''} \cite{wiki:exc}.
 In this paper we show  that on the contrary,  non-excellent rings are quite easy to construct and are abundant, even among regular local rings of dimension one.  Our setting  is prime characteristic since Dedekind domains of characteristic zero are always excellent \cite[Cor 8.2.40]{Liu06}. The examples we construct, moreover, are {\it generically F-finite}, unlike other known examples such as {Nagata's} $k\otimes_{k^p}{k^p}[[t]]$ (whenever $[k:k^p] = \infty$),  whose completion map is purely inseparable. 
 
Excellence in prime characteristic is closely connected to another common hypothesis, that of {\it F-finiteness.} A ring of characteristic $p$ is $F$-finite if the Frobenius (or $p$-th power) map is finite. A well-known theorem of Kunz ensures that $F$-finite rings of characteristic $p$ are excellent \cite[Thm 2.5]{Kunz76}. The converse is also true under the additional hypothesis that the fraction field is $F$-finite. Put differently, a domain is $F$-finite if and only if it is excellent and generically $F$-finite.  For example, for any  domain $R$ whose fraction field is the function field of some algebraic variety over a perfect (e.g. algebraically closed) field, $R$ is $F$-finite if and only if $R$ is excellent. Because this fact does not seem to be well-known, we
show in Section 2 how this statement follows easily from  known facts in the literature. 
 
 In Section 3, we turn toward the issue of $p^{-e}$-linear maps.  
 For a ring $R$ of prime characteristic $p$, a $p^{-e}$-linear map is a map $R\overset{\phi}\rightarrow R$ of the underlying abelian group that satisfies  $\phi(r^{p^e}s) =  r\phi(s)$ for all $r, s \in S$. Any splitting of the Frobenius map is such a $p^{-1}$-linear map. In Section 3, we show that for {Noetherian} domains with F-finite fraction field, there are no non-zero $p^{-e}$-linear maps at all unless $R$ is excellent! Put differently using the language of Hochster \cite{HochSolid}, we show that {a generically $F$-finite Noetherian domain is a solid algebra via Frobenius if and only if it is excellent.   In particular, if a generically $F$-finite Noetherian domain is Frobenius split, it must be excellent.}

The study of  $p^{-e}$-linear maps, or equivalently, elements of   ${\text{Hom}}_R(F_*^eR,R)$, was formalized by Manuel Blickle and later used by Karl Schwede to give an alternate and more global approach to {\it  test ideals,} an important topic in characteristic $p$ commutative algebra. Our results show that Schwede's approach to test ideals relies much more heavily {on} $F$-finiteness  than previously understood. Test ideals can be viewed as ``prime characteristic analogs" of multiplier ideals due to the results in \cite{Smi00} and \cite{Har01} (see also \cite{HarYos03} and \cite{HarTag04}). While they have attracted great interest in birational algebraic geometry, our results in Section 4 offer a cautionary tale about the limits of this approach. 

In Section 4, we consider excellence in the setting of {\it discrete valuation rings} of a function field of characteristic $p$. Excellence in this case is equivalent to the DVR  being {\it divisorial,} a topic explored in \cite{DatSmi16}. We show here how this makes it is easy to write down explicit examples of non-excellent discrete valuation rings. Moreover, a simple countability argument shows that  among domains whose fraction field is, say the function field of $\mathbb P^2$, {\bf non-excellent} regular local rings of dimension one are far more abundant than the excellent ones. 

 This paper is largely expository,  drawing heavily on the work in \cite{DatSmi16} (and the corrections in \cite{DatSmi17}) where most of the harder proofs of the results discussed here appear. However, we are emphasizing a different aspect of the subject than in that paper, drawing conclusions not explicit there.
  
\noindent \textbf{Acknowledgments:} We are honored to help celebrate the birthday of Lawrence Ein, who has been a tremendous inspiration and support for the second author, both mathematically and personally. {We thank the referee for their careful comments, and, in particular, for suggesting a generalization of an earlier version of Proposition \ref{prop2}.}

\section{Excellence}
An arbitrary Noetherian ring can exhibit pathological behavior. For instance, the integral closure of a Noetherian domain in a finite extension of its fraction field can fail to be Noetherian, and Noetherian rings can have saturated chains of prime ideals of different lengths. Excellent rings were introduced by Grothendieck in  \cite[expos\'e IV]{Groth} to rule out such pathologies. We review Grothendieck's definition and some other relevant properties of excellent rings. Another good source is \cite[Chapter 32]{Mat80}.

\smallskip

\begin{definition}
\label{definition of excellence}
\cite[D\'ef 7.8.2]{Groth} A Noetherian ring $A$ is \textbf{excellent} if it satisfies the following properties:
\begin{enumerate}
\item $A$ is \emph{universally catenary}. This means that every finitely generated $A$-algebra has the  property that  
for any two prime ideals $p \subseteq q$, all saturated chains of prime ideals from $p$ to $q$ have the same length.\footnote{{The first example of a non-catenary Noetherian ring was given by Nagata in \cite{Nag56}.}}
\item   All {\it formal fibers } of $A$ are {\it geometrically regular. } This means that for every $p \in \operatorname{Spec}(A)$, the  fibers  of the natural map $\operatorname{Spec}(\widehat{A_p}) \rightarrow \operatorname{Spec}(A_p)$ induced by completion along $p$ are geometrically regular in the sense that for each  $x \in \operatorname{Spec}(A_p)$, the ring   $\widehat{A_p} \otimes_{A_p} K$ is regular for any finite field extension $K$ of the residue field $\kappa(x)$.

\item For every finitely generated $A$ algebra $B$, the regular locus of $ \operatorname{Spec}(B)$ is open; that is, the set $$\{q \in \operatorname{Spec}(B)\, : B_q {\text{ is a regular local ring}}\}$$ is open in $\operatorname{Spec}(B)$.
\end{enumerate} 
\end{definition}

The class of excellent rings is closed under homomorphic image, localization, and finite-type algebra extensions. Since every field is excellent, it follows that nearly every ring one is likely to encounter in classical algebraic geometry is excellent. Likewise, because the ring of integers is excellent and all complete local rings are excellent, familiar rings   likely to arise in number theory are excellent as well. 
 
{Among the many properties of excellent rings, the following, sometimes called the {\textbf{Japanese}} or {\textbf{N2}} property, will be important for us later.} 


\begin{proposition} 
\label{Japanese}  
\cite[expos\'e IV, 7.8.3 (vi)]{Groth}.  
Let $A$ be a Noetherian excellent domain.
  The integral closure of $A$ in any finite extension of its fraction field is {\it finite} as an $A$-module. 
\end{proposition}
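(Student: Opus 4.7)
The plan is to exploit axiom (2) of Definition \ref{definition of excellence} --- geometric regularity of the formal fibers --- and to reduce to a classical theorem of Nagata. The first step is to observe that for every maximal ideal $\mathfrak{m} \subseteq A$, the completion $\widehat{A_{\mathfrak{m}}}$ is reduced. Indeed, $A_{\mathfrak{m}}$ is a domain, hence reduced, and the completion map $A_{\mathfrak{m}} \to \widehat{A_{\mathfrak{m}}}$ is faithfully flat with geometrically regular --- in particular geometrically reduced --- fibers. Since reducedness descends along flat maps with geometrically reduced fibers, $\widehat{A_{\mathfrak{m}}}$ is reduced. In other words, each localization $A_{\mathfrak{m}}$ is analytically unramified.

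With analytic unramifiedness in hand, the main input is a classical theorem of Nagata: a Noetherian local domain whose completion is reduced is N-2 (see \cite[Thm 32.2]{Mat80}). The sketch of that theorem's proof is that for a finite extension $L$ of $K = \mathrm{Frac}(A_{\mathfrak{m}})$, the total ring of fractions $\mathrm{Frac}(\widehat{A_{\mathfrak{m}}})$ decomposes as a finite product of fields (by reducedness), so $L \otimes_K \mathrm{Frac}(\widehat{A_{\mathfrak{m}}})$ is a finite product of finite field extensions of these factors. One then bounds the integral closure of $\widehat{A_{\mathfrak{m}}}$ inside this ring, using that complete local domains are themselves N-2, and descends finiteness back to $A_{\mathfrak{m}}$ using faithful flatness.

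The main obstacle is passing from the local statement ``each $A_{\mathfrak{m}}$ is N-2'' to the global statement ``$A$ is N-2'', since $A$ may carry infinitely many maximal ideals and finite generation of modules is not, in general, a purely stalkwise property. The cleanest route proceeds via the trace pairing when $L/K$ is separable: choosing a $K$-basis $e_1,\dots,e_n$ of $L$ lying in $B$ and the dual basis $e_1^*,\dots,e_n^*$ under the trace form, one verifies that $b = \sum_i \mathrm{Tr}(be_i)\,e_i^*$ for every $b \in B$, with $\mathrm{Tr}(be_i)$ integral over $A$ and hence in $\widetilde{A}$. Thus $B \subseteq \widetilde{A}e_1^* + \cdots + \widetilde{A}e_n^*$, which reduces the separable case of N-2 to the N-1 property of $A$. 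The N-1 property, in turn, is extracted from the local analytic-unramifiedness combined with axiom (3), openness of the regular locus, via a Noetherian induction on the non-normal locus. The purely inseparable case in characteristic $p$ is genuinely delicate and is, in fact, the main thrust of the present paper.
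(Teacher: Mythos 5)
The paper does not prove this Proposition; it cites it directly from Grothendieck's EGA IV and then uses it as a black box in the proof of Theorem \ref{ExcF-finite}. So there is no internal proof to compare yours against --- you are attempting to supply what the paper takes for granted.

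Your outline has a genuine gap at the central step, and it is precisely a characteristic-$p$ phenomenon. The version of Nagata's theorem you invoke --- that a Noetherian local domain with reduced completion is N-2 --- is false as stated. The DVRs $V_p$ constructed in Subsection \ref{examples} of this very paper are counterexamples: being discrete valuation rings, their completions are complete DVRs, hence domains, hence reduced; yet these $V_p$ fail to be Japanese, since the integral closure of $V_p$ inside $K^{1/p}$ is $F_*V_p$, which is not module-finite because Theorem \ref{exDivisorial} shows $V_p$ is not $F$-finite. The place your sketch of Nagata's argument breaks is exactly the claim that $L \otimes_K \operatorname{Frac}\bigl(\widehat{A_\mathfrak{m}}\bigr)$ is a finite product of fields. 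That holds when $L/K$ is separable, but when $L/K$ is inseparable the tensor product can acquire nilpotents unless the generic formal fiber $\operatorname{Frac}\bigl(\widehat{A_\mathfrak{m}}\bigr)$ is \emph{geometrically} reduced over $K$. The correct form of the Zariski--Nagata theorem takes geometric reducedness of the formal fibers as its hypothesis, not mere reducedness of $\widehat{A_\mathfrak{m}}$. Your step 1 only extracts the latter from axiom (2) of Definition \ref{definition of excellence}, when you should extract the former, which axiom (2) supplies just as readily since geometric regularity is stronger than geometric reducedness. With the corrected hypothesis the theorem applies uniformly to separable and inseparable extensions alike, and there is no residual case to dispatch separately.

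Your closing remark, that the purely inseparable case ``is, in fact, the main thrust of the present paper,'' is also mistaken. The paper does not prove the Japanese property for excellent rings; it assumes it, and what it proves is a converse-flavored statement: a generically $F$-finite Noetherian domain is excellent if and only if it is $F$-finite, if and only if it is solid over $R^{p^e}$. The direction ``excellent $\Rightarrow$ Japanese'' used in the paper's proof of Theorem \ref{ExcF-finite} is exactly the direction your argument is supposed to supply, so the paper cannot be leaned on to close the gap.
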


{We construct discrete valuation rings of characteristic $p$ which fail to be Japanese in Subsection \ref{examples}.}

\medskip
\subsection{Excellence in prime characteristic}
Fix a  commutative ring  $R$ of prime characteristic $p$. 
The Frobenius map is the ring homomorphism $R \overset{F}\to R $ sending each element to its $p$-th power. 
It is convenient to denote the target copy of $R$ by $F_*R$.  Thus the notation  $F_*R$ denotes the ring $R$ but viewed as  an $R$-module  via the Frobenius map: an element  $r \in R$  acts on $x \in F_*R$ to produce $ r^px$.  Similarly, iterating the Frobenius map,  $F_*^eR$ denotes the $R$-algebra defined by the $e$-th iterate of Frobenius
 $R \overset{F^e}\longrightarrow F_*^eR$ sending $r\mapsto r^{p^e}$.

\begin{definition}
\label{definition of F-finite}
The ring $R$ is \textbf{F-finite} if the Frobenius map is finite; that is, $R$ is F-finite if $F_*R$ is a finitely generated $R$-module.
\end{definition}

F-finite rings are ubiquitous. For example,  it is easy to check that every perfect field is F-finite, and that a finitely generated algebra over an F-finite ring is F-finite. Furthermore, F-finiteness is preserved under homomorphic images, localization and completion, similar to excellence. Indeed, the two notions are closely related:

\begin{theorem} 
\label{ExcF-finite}
Let $R$ be a Noetherian domain whose fraction field $K$ satisfies $[K:K^p] < \infty$. Then $R$ is excellent if and only if $R$ is F-finite.
\end{theorem}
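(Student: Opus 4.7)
The plan is to handle the two directions separately. The forward implication, that F-finiteness implies excellence, is a celebrated theorem of Kunz \cite{Kunz76} and can simply be cited; note that the hypothesis $[K:K^p] < \infty$ is not required for this direction.

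For the converse, suppose $R$ is excellent with $[K:K^p] < \infty$. My first move is a translation: $R$ is F-finite if and only if $R$ is finitely generated as a module over its subring $R^p = \{r^p : r \in R\}$ of $p$-th powers. This is because the $R$-action on $F_*R$ through Frobenius is precisely multiplication by $p$-th powers, so a generating set for $F_*R$ as an $R$-module is the same as a generating set for $R$ as an $R^p$-module. Thus the goal becomes showing $R$ is module-finite over $R^p$.

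Set $S := R^p$. Since $R$ is a domain (so Frobenius is injective), the Frobenius gives a ring isomorphism $R \xrightarrow{\sim} S$, and so $S$ inherits both Noetherianity and excellence from $R$; its fraction field is $K^p$. Every $r \in R$ satisfies the monic polynomial $X^p - r^p \in S[X]$, so $R$ is integral over $S$ inside $K$. By hypothesis, $K$ is a finite field extension of $\mathrm{Frac}(S) = K^p$. I now invoke Proposition \ref{Japanese}: excellence of $S$ yields the Japanese (N2) property, so the integral closure of $S$ in the finite extension $K$ is module-finite over $S$. Since $R$ embeds into this integral closure and $S$ is Noetherian, $R$ itself is module-finite over $S = R^p$, which is precisely F-finiteness.

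The only genuinely nontrivial point is the recognition that excellence transfers from $R$ to the subring $R^p$ through the Frobenius isomorphism, which then makes the Japanese property of Proposition \ref{Japanese} directly applicable. Once that translation is in place, no further calculation is needed, so I do not anticipate any real obstacle.
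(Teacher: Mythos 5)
Your proposal is correct and is essentially the same argument as the paper's: transfer excellence to $R^p$ through the Frobenius isomorphism, use the Japanese (N2) property to make the integral closure of $R^p$ in $K$ module-finite over $R^p$, and conclude by Noetherianity since $R$ sits inside that integral closure. The only superficial difference is notational (you use $S$ for $R^p$, the paper uses $S$ for the integral closure).
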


\begin{proof}
One direction of Theorem \ref{ExcF-finite} is the following famous {result} of Kunz:
\begin{theorem} \cite[Thm 2.5]{Kunz76}
\label{Kunz-excellent}
Let $R$ be any Noetherian ring of prime characteristic. If the Frobenius map $R\rightarrow F_*R$ is finite, then $R$ is excellent.
\end{theorem}
 
So to prove Theorem \ref{ExcF-finite}, we only need to prove the converse under the hypothesis that $R$ is a domain with F-finite fraction field. 
{The ring $R^p$, with fraction field $K^p$, is excellent because it is isomorphic to $R$ via the Frobenius map}. Since $K^p\hookrightarrow K$ is finite by assumption, the integral closure $S$ of $R^p$ in $K$ is a finitely generated $R^p$-module by Proposition \ref{Japanese}. 
But clearly every element of $R$ is integral over $R^p$, as each $r\in R$ satisfies the integral polynomial $x^p-r^p$ over $R^p$. This means that $R$ is an $R^p$-submodule of the Noetherian $R^p$-module $S$, hence  $R$ itself is a Noetherian $R^p$-module. That is, $R$ is finitely generated as an $R^p$-module, and the Frobenius map is finite.  
In other words, $R$ is F-finite.
\end{proof}

\begin{corollary}
\label{Excellent-finite-reduced}
Let $R$ be a reduced, Noetherian ring of characteristic $p$ whose total quotient ring $K$ is F-finite. Then $R$ is excellent if and only if $R$ is F-finite.
\end{corollary}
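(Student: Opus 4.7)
The plan is to reduce to the domain case of Theorem \ref{ExcF-finite} using the minimal prime decomposition of $R$. The direction ``$F$-finite implies excellent'' is immediate from Kunz's theorem (Theorem \ref{Kunz-excellent}), which requires no domain or reducedness hypothesis. So the nontrivial content is showing that if $R$ is excellent, reduced, and Noetherian with $F$-finite total quotient ring $K$, then $R$ is $F$-finite.

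To that end, let $P_1, \ldots, P_n$ be the minimal primes of $R$. Since $R$ is reduced, $\bigcap_i P_i = 0$ and $K = \prod_{i=1}^n K_i$, where $K_i = \operatorname{Frac}(R/P_i)$. First I would verify that each $K_i$ is $F$-finite: since $K^p = \prod_i K_i^p$ as a subring of $K$, and the $K^p$-action on the quotient field $K_i$ factors through the projection $K^p \twoheadrightarrow K_i^p$, we get $[K_i : K_i^p] \leq [K : K^p] < \infty$. Each $R/P_i$ is an excellent Noetherian domain, as a homomorphic image of $R$, so Theorem \ref{ExcF-finite} yields that each $R/P_i$ is $F$-finite. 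Consequently $\bar R := \prod_i R/P_i$ is $F$-finite as a finite product of $F$-finite rings.

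Next, I would observe that the canonical injection $R \hookrightarrow \bar R$ is module-finite: $\bar R$ is generated as an $R$-module by the orthogonal idempotents $e_i$ of the product, since every element $(\bar r_1, \ldots, \bar r_n) \in \bar R$ equals $\sum_i r_i e_i$. Thus $F_*\bar R$ is finite over $\bar R$, which is finite over $R$, so $F_*\bar R$ is a finite $R$-module. Because $R$ is reduced, Frobenius on $R$ is injective, giving an inclusion $F_*R \hookrightarrow F_*\bar R$; since $R$ is Noetherian, this submodule is itself finite over $R$, and $R$ is $F$-finite.

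The main potential wrinkle is the verification that each $K_i$ is $F$-finite from the $F$-finiteness of $K$ — essentially the observation that the $K^p$-action on a component field $K_i$ coincides with the $K_i^p$-action. Once that is in hand, the rest assembles from Theorem \ref{ExcF-finite} and the module-finiteness of $R \hookrightarrow \bar R$ coming from the idempotents.
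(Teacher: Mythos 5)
Your proof is correct and follows essentially the same route as the paper: pass to the minimal primes, apply Theorem \ref{ExcF-finite} componentwise, note that $\prod_i R/P_i$ is module-finite over $R$ via the idempotents, and finish by Noetherianity of $R$ acting on the submodule $F_*R \hookrightarrow F_*\bar R$. One small wording slip: the inclusion $F_*R \hookrightarrow F_*\bar R$ comes from the injectivity of $R \hookrightarrow \bar R$ (i.e.\ from $\bigcap_i P_i = 0$), not from injectivity of Frobenius on $R$ --- but you had already established that injection, so the argument stands.
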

\begin{proof}
The backward implication is again a consequence of Kunz's Theorem \ref{Kunz-excellent}. So assume that $R$ is excellent. Let $q_1, \dots, q_n$ be the minimal primes of $R$. We denote the corresponding minimal primes of $R^p$ by $q^p_i$. Let $K_i$ be the fraction field of $R/q_i$, so that $K^p_i$ is the fraction field of $R^p/q^p_i$. Then we have a commutative diagram

$$
\xymatrix{
R \ar@{^{(}->}[r] &{R/q_1 \times \dots \times R/q_n} \ar@{^{(}->}[r] & {K_1 \times \dots \times K_n \cong K}\\
R^p \ar@{^{(}->}[u]\ar@{^{(}->}[r] &{R^p/q^p_1 \times \dots \times R^p/q^p_n}\ar@{^{(}->}[u] \ar@{^{(}->}[r] &{K^p_1 \times \dots \times K^p_n \cong K^p} \ar@{^{(}->}[u]}
$$
where all rings involved are $R^p$-modules, and the horizontal maps are injections because $R$ is reduced. Since $R$ is excellent, so is each quotient $R/q_i$, and F-finiteness of $K$ implies that each $K_i$ is also a finitely generated $K^p_i$-module. Thus, Theorem \ref{ExcF-finite} implies that each $R/q_i$ is F-finite, that is, $R/q_i$ a finitely generated $(R/q_i)^p = R^p/q^p_i$-module. As a consequence,
$$R^p/q^p_1 \times \dots \times R^p/q^p_n \hookrightarrow R/q_1 \times \dots \times R/q_n$$
is a finite map, and so is the map $R^p \hookrightarrow  R^p/q^p_1 \times \dots \times R^p/q^p_n$. This shows that $R/q_1 \times \dots \times R/q_n$ is a finitely generated $R^p$-module, and being a submodule of the Noetherian $R^p$-module $R/q_1 \times \dots \times R/q_n$, $R$ is also a finitely generated $R^p$-module. Thus, $R$ is F-finite.
\end{proof}

\medskip

Theorem \ref{ExcF-finite} offers  a simple way to think about
excellence in prime characteristic, at least for domains in function fields. In Section \ref{examplesnon-excellent}, we use Theorem \ref{ExcF-finite} to easily construct many nice examples of non-excellent rings.

{In the spirit of Theorem \ref{ExcF-finite}, there is also an equivalence of excellence and F-finiteness in a slightly different context:}

\begin{theorem}
\cite[Corollary 2.6]{Kunz76}
\label{Local equiv of ExcF-finite}
Let $(R, \mathfrak m)$ be a Noetherian local ring with F-finite residue field. Then $R$ is excellent if and only if $R$ is F-finite.
\end{theorem}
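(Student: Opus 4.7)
The $(\Leftarrow)$ implication is Kunz's Theorem \ref{Kunz-excellent}. For the converse, the plan is to reduce to the complete local case via Cohen's structure theorem, establish F-finiteness of $\widehat{R}$, and then descend F-finiteness back to $R$ using the regularity of the completion map $R \to \widehat{R}$ provided by excellence.

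First I would verify that $\widehat{R}$ is F-finite. Since $R$ has characteristic $p$ it is equicharacteristic, so by Cohen's structure theorem $\widehat{R}$ is a quotient of a formal power series ring $k[[x_1, \dots, x_n]]$ over the residue field $k = R/\mathfrak{m}$. A direct calculation, using $[k:k^p] < \infty$, shows $k[[x_1, \dots, x_n]]$ is F-finite: if $m_1, \dots, m_d$ is a $p$-basis of $k$ over $k^p$, then $\{m_\alpha x^\beta : 1 \leq \alpha \leq d,\ 0 \leq \beta_i < p\}$ is a free basis of $k[[x_1, \dots, x_n]]$ over $k^p[[x_1^p, \dots, x_n^p]]$. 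Since F-finiteness passes to quotients, $\widehat{R}$ is F-finite.

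Next I would descend F-finiteness from $\widehat{R}$ to $R$. Excellence of $R$ ensures that the faithfully flat map $R \to \widehat{R}$ is regular, i.e., flat with geometrically regular fibers. By the Radu--Andr\'e theorem characterizing regular homomorphisms in characteristic $p$, the relative Frobenius
$$\Phi : \widehat{R} \otimes_R F_*R \longrightarrow F_*\widehat{R}, \qquad \widehat{r} \otimes r' \mapsto \widehat{r}^p \cdot r',$$
is faithfully flat, hence injective. Because $\widehat{R}$ is F-finite, $F_*\widehat{R}$ is finitely generated over the Noetherian ring $\widehat{R}$, so its $\widehat{R}$-submodule $\widehat{R} \otimes_R F_*R$ is also finitely generated over $\widehat{R}$. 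Standard faithfully flat descent of finite generation along $R \to \widehat{R}$ (any finite generating set of $\widehat{R} \otimes_R F_*R$ can be refined to simple tensors $r_j \otimes 1$, and then $F_*R / \sum R r_j$ vanishes after base change, hence vanishes) then yields that $F_*R$ is finitely generated over $R$; that is, $R$ is F-finite.

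The main obstacle in this plan is the invocation of the Radu--Andr\'e theorem, which is the essential prime-characteristic mechanism converting geometric regularity of the formal fibers into (faithful) flatness of the relative Frobenius. Absent this translation one has only the topological identification $\widehat{F_*R} \cong F_*\widehat{R}$ of $\mathfrak{m}$-adic completions, which by itself does not suffice to pass from finite generation of $F_*\widehat{R}$ over $\widehat{R}$ to that of $F_*R$ over $R$.
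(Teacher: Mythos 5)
Your proof is correct. The paper does not actually prove this theorem --- it is cited directly from Kunz, and the paper only offers a one-paragraph remark sketching the idea (Cohen's structure theorem gives F-finiteness of $\widehat{R}$, and excellence lets F-finiteness descend from $\widehat{R}$ to $R$). Your argument is a complete and correct realization of exactly that sketch. The substantive content you supply is the descent mechanism: the Radu--Andr\'e theorem converts geometric regularity of the formal fibers into faithful flatness of the relative Frobenius $\widehat{R}\otimes_R F_*R\to F_*\widehat{R}$, hence injectivity, so $\widehat{R}\otimes_R F_*R$ sits inside the finite $\widehat{R}$-module $F_*\widehat{R}$ and is therefore finite over $\widehat{R}$; faithfully flat descent of surjectivity then forces $F_*R$ to be finite over $R$. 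All of this is sound, and your closing observation --- that the mere identification $\widehat{F_*R}\cong F_*\widehat{R}$ would not let you descend finite generation --- correctly pinpoints why some genuine input from the regularity of the formal fibers is needed. Worth noting only for historical accuracy: the Radu--Andr\'e theorem postdates Kunz's 1976 paper, so Kunz's own proof of this corollary necessarily ran along somewhat different lines; but as a modern reconstruction your route is the standard and cleanest one.
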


It is worth pointing out that Theorem \ref{Local equiv of ExcF-finite} is closely related to Theorem \ref{ExcF-finite}. {Indeed the backward implication follows from Theorem \ref{ExcF-finite}. Moreover, the hypothesis of Theorem \ref{Local equiv of ExcF-finite} ensures that the completion $\widehat{R}$ is $F$-finite, because Cohen's structure theorem shows that a complete Noetherian local ring  of equal characteristic $ p > 0 $ is F-finite if and only if the residue field is F-finite. The new implication in Theorem \ref{Local equiv of ExcF-finite} then says that if $R$ is excellent, that is, when the completion map $R \rightarrow \widehat{R}$ is well-behaved, $F$-finiteness descends from $\widehat{R}$ to $R$.}


\subsection{Frobenius splitting vs. F-purity} The hypothesis of F-finiteness is often seen in contexts where 
Frobenius splitting is discussed. We recall the definitions of Frobenius splitting and the closely related notion of F-purity, which is  sometimes confused with it. These notions were originally defined in \cite{MehRam85} and \cite{HR1}, respectively.

\begin{definition} 
\label{basic char p notions} 
Let $R$ be an arbitrary commutative ring of prime characteristic $p$.
\begin{enumerate}
\item[(a)] The ring $R$ is \textbf{Frobenius split} if the map $R \overset{F}\rightarrow F_*R$ splits as a map of $R$-modules, that is, there exists an $R$-module map $F_*R \rightarrow R$ such that the composition
$$R \xrightarrow{F} F_*R \rightarrow R$$
is the identity map.
\item[(b)] The ring $R$ is \textbf{F-pure} if $R \overset{F}\rightarrow F_*R$ is a pure map of $R$-modules; this means that the map remains injective after tensoring with any $R$-module $M$.
\end{enumerate}
\end{definition}

It is easy to see  that Frobenius split rings are always F-pure. It is also well-known  that in the presence of F-finiteness, a Noetherian ring is 
Frobenius split if and only if it is F-pure  \cite[Corollary 5.2]{HR1}. However, the relationship between F-purity and Frobenius splitting for a general excellent ring is less understood. Corollary \ref{Excellent-finite-reduced} clarifies that, at least in a large and important setting, 
 there is little difference between the F-finite and excellent settings {for the question of comparing splitting versus purity}:

\begin{corollary}\label{equivInExcellent} For an excellent  Noetherian reduced ring whose total quotient ring is F-finite,  Frobenius splitting is equivalent to F-purity.
 For an excellent  local Noetherian ring  whose  residue field F-finite, Frobenius splitting is equivalent to F-purity.
\end{corollary}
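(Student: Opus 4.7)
The plan is to reduce both statements to the F-finite case, where the equivalence between Frobenius splitting and F-purity is already known via \cite[Corollary 5.2]{HR1}. One direction, namely that Frobenius splitting implies F-purity, is immediate and holds in full generality: any split injection of $R$-modules is pure, and this requires no hypotheses on $R$. So in both parts, the real content is the converse, that F-purity implies Frobenius splitting, and this is precisely what the cited Hochster--Roberts result provides whenever $R$ is Noetherian and F-finite.

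For the first statement, I would invoke Corollary \ref{Excellent-finite-reduced}: a reduced excellent Noetherian ring whose total quotient ring is F-finite is itself F-finite. Hence such a ring falls within the setting of \cite[Corollary 5.2]{HR1}, and F-purity upgrades to Frobenius splitting. For the second statement, I would instead invoke Theorem \ref{Local equiv of ExcF-finite}, which says that a Noetherian local ring with F-finite residue field is excellent if and only if it is F-finite. Once F-finiteness is in hand, the same application of Hochster--Roberts finishes the argument.

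Thus the proof is really a two-line assembly: the earlier results in the paper push each of the two settings into the F-finite world, and then the classical F-finite theorem takes over. There is no genuine obstacle here beyond recognizing that the hypotheses of each bullet were chosen precisely so that F-finiteness becomes available; the corollary is essentially a repackaging of Corollary \ref{Excellent-finite-reduced} and Theorem \ref{Local equiv of ExcF-finite} together with the known F-finite equivalence. The only subtlety worth flagging is that purity always implies the \emph{injectivity} of $R \to F_*R$, so reducedness (automatic in the F-pure case) is not an extra hypothesis one needs to worry about when invoking Corollary \ref{Excellent-finite-reduced}.
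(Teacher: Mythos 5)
Your proof is correct and follows the paper's own argument exactly: reduce each case to the F-finite setting via Corollary \ref{Excellent-finite-reduced} and Theorem \ref{Local equiv of ExcF-finite} respectively, note that split implies pure trivially, and invoke \cite[Corollary 5.2]{HR1} for the converse. Your closing remark about reducedness being automatic for F-pure rings is a fine observation but not needed, since the corollary already assumes $R$ is reduced as a hypothesis.
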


\begin{proof}[Proof of Corollary] It easily follows from the definitions that a split map is pure, so
Frobenius splitting always implies F-purity. Our hypotheses in both statements imply F-finiteness (from  Corollary \ref{Excellent-finite-reduced} and Theorem \ref{Local equiv of ExcF-finite}, respectively), so splitting and purity are equivalent by  \cite[Corollary 5.2]{HR1}.
\end{proof}

\begin{remark}
We do not know any example of an {\it  excellent } F-pure ring that is not F-split. As we see in Section \ref{examplesnon-excellent}, there are plenty of non-excellent examples {even among regular local rings}.
\end{remark}

\section{Maps inverse to Frobenius}
Test ideals are an important  technical tool in both  commutative algebra and birational geometry. The original test ideal of Hochster and Huneke is the ideal generated by all the {\it test elements} for tight closure; they show such test elements exist for excellent local rings in \cite{HochHun94}. Many recent authors have taken the point of view that a slightly  smaller ideal, sometimes called the non-finitistic test ideal, is the more natural object; this ideal is known to be the same as Hochster and Huneke's test ideal in many cases and conjectured to be the same quite generally. See the surveys    \cite{SchweTuck12} or  \cite{SmiZha15} for more information on this history.
    
An important insight of Schwede is that (under appropriate hypothesis) the test ideal can be defined independently of tight closure. 
\begin{definition}
Fix an F-finite ring $R$. An ideal $J$ is said to be uniformly $F$-compatible if for all $e$ and all $\phi \in {\text{Hom}}_R(F_*^eR, R)$, we have $\phi(F^e_*(J))\subset J$.
\end{definition}
     
It is not at all obvious that non-trivial uniformly $F$-compatible ideals exist. Schwede shows, however, using 
a deep theorem of Hochster and Huneke \cite[Theorem 5.10]{HochHun94}, that there is in fact a unique smallest non-zero such ideal \cite{Schw11}. This is the (non-finitistic) {\bf test ideal}.
    
The point we want to emphasize is that the modules $ {\text{Hom}}_R(F_*^eR, R)$ play a crucial role in this approach to test ideals.  Note also that a splitting of Frobenius is a particular element of $ {\text{Hom}}_R(F_*^1R, R)$, namely a map  $F_*R\overset{\phi}\longrightarrow R$ satisfying $\phi(1) = 1$. 
 
Our next theorem shows, however,  that there is little hope to use this approach beyond the F-finite case.
  
\begin{theorem}\label{F-finiteExcellent} Let $R$  be a Noetherian  domain of characteristic $p$  whose fraction
field is $F$-finite. Then the following are equivalent:
\begin{enumerate}
\item 
$R$ is excellent.
\item The Frobenius  endomorphism  $R\overset{F} \longrightarrow F_* R $ is finite.
\item The module ${\operatorname{Hom}}_R(F_*R, R)$  is non-zero.
\item  For all  $e > 0$, the module ${\operatorname{Hom}}_R(F^e_*R, R)$  is non-zero.
\item There  exists $e > 0$  such that  ${\operatorname{Hom}}_R(F^e_*R, R)$  is not the trivial module.
\end{enumerate}
\end{theorem}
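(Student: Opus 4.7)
The equivalence $(1) \Leftrightarrow (2)$ is supplied by Theorem \ref{ExcF-finite}. To establish $(2) \Rightarrow (4)$, I would iterate $F$-finiteness: each $F^e_*R$ becomes a nonzero finitely generated torsion-free $R$-module of rank $[K:K^{p^e}] \geq 1$, and any such module over a Noetherian domain has nonzero $R$-dual — obtained by composing a $K$-linear coordinate projection $F^e_* R \otimes_R K \to K$ that is nonzero on $F^e_*R$ with clearing a common denominator of the images of the generators. The implications $(4) \Rightarrow (3) \Rightarrow (5)$ are immediate specializations (to $e=1$, and from ``all $e$'' to ``some $e$'').

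The substantive implication is $(5) \Rightarrow (2)$. Given a nonzero $\phi \in \operatorname{Hom}_R(F^e_*R, R)$, I would reduce to showing $F^e_*R$ is finitely generated over $R$, since $R^p$ sits as an intermediate ring in the tower $R^{p^e} \subset R^{p^{e-1}} \subset \cdots \subset R^p \subset R$ and finite ring extensions remain finite on intermediate overrings. To exploit $\phi$, I would extend scalars to the fraction field via $F^e_*R \otimes_R K = F^e_*K$, obtaining a nonzero $K$-linear map $\phi_K : F^e_*K \to K$. Since $K$ is $F$-finite, $F^e_*K$ is a finite-dimensional $K$-vector space and $\operatorname{Hom}_K(F^e_*K, K)$ is free of rank one over $F^e_*K$ under the action $(y * \psi)(x) = \psi(yx)$ — the standard duality for finite field extensions. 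Fixing a generator $\Phi$, I would write $\phi_K(x) = \Phi(cx)$ for a unique nonzero $c \in F^e_*K$, so that the constraint $\phi(F^e_*R) \subset R$ becomes the inclusion $F^e_*R \subset c^{-1} \Phi^{-1}(R)$ inside $F^e_*K$.

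The hard part will be to combine this inclusion with the intrinsic characterization of $F^e_*R$ (its elements being $p^e$-th roots of elements of $R$) to trap $F^e_*R$ inside a \emph{finitely generated} $R$-submodule of $F^e_*K$; once such a finitely generated containing module is identified, Noetherianity of $R$ forces finite generation of $F^e_*R$ itself, completing the proof. Identifying that finitely generated module — by choosing $\Phi$ carefully relative to a $p^e$-basis of $K$ over $K^{p^e}$ whose representatives can be arranged to lie in $R$, and then analyzing the resulting dualizing submodule — is the real technical heart, and this is where I would appeal to the machinery developed in \cite{DatSmi16}.
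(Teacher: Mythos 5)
Your handling of $(1) \Leftrightarrow (2)$, $(2) \Rightarrow (4)$, and the trivial implications $(4) \Rightarrow (3) \Rightarrow (5)$ is fine (the paper goes $(2)\Rightarrow(3)\Rightarrow(4)$ instead, by composing $p^{-e}$-linear maps, but your torsion-free duality argument is equally valid). The problem is the step $(5) \Rightarrow (2)$, which you correctly identify as the substantive one but do not actually prove.

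Your plan for $(5) \Rightarrow (2)$ has a real gap. You obtain the single inclusion $F^e_*R \subset c^{-1}\Phi^{-1}(R)$, but this does not trap $F^e_*R$ in a finitely generated $R$-module: the set $\Phi^{-1}(R)$ contains the entire $K$-kernel of $\Phi$, a codimension-one $K$-subspace of $F^e_*K$, so it is far from finitely generated over $R$. To control $F^e_*R$ you would need to constrain it simultaneously against many functionals, and this is precisely what your sketch does not do; you acknowledge as much by deferring ``the real technical heart'' to unspecified machinery in \cite{DatSmi16}. In fact the paper's closing of this gap is elementary and self-contained, via a double-dual argument (Lemma \ref{lem1} and Proposition \ref{prop2}): one shows that for every nonzero $s \in F^e_*R$ there is $\varphi \in \operatorname{Hom}_R(F^e_*R,R)$ with $\varphi(s) \neq 0$, by taking a polynomial relation $a_n s^n + \cdots + a_1 s + a_0 = 0$ with $a_0 \neq 0$ over $R$ (using algebraicity of $\operatorname{Frac}(F^e_*R)$ over $\operatorname{Frac}(R)$) and precomposing a given nonzero $\phi$ with left multiplication by $\lambda = a_n s^{n-1} + \cdots + a_1$, so that $\phi(\lambda s) = \phi(-a_0) = -a_0\phi(1) \neq 0$. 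This injects $F^e_*R$ into $\operatorname{Hom}_R(\operatorname{Hom}_R(F^e_*R,R),R)$. Separately, $\operatorname{Hom}_R(F^e_*R,R)$ is finitely generated because it injects into the dual of a free $R$-module spanned by a $K$-basis of $F^e_*K$ contained in $F^e_*R$ (the quotient being torsion). Hence the double dual is finitely generated, so $F^e_*R$ is too by Noetherianity. You should supply this argument (or an equivalent one) rather than leaving it as a citation; without it, the most important implication of the theorem is unproved.
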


Conditions (3)-(5) in Theorem \ref{F-finiteExcellent} can be stated using Hochster's notion of a {\bf solid algebra}. 
 
\begin{definition} 
An $R$-algebra $A$ is {\bf solid} if there exists a non-trivial $R$-module map $A\rightarrow R$.
\end{definition}
 
Thus condition (3) above precisely states that $F_*R$ is a solid $R$-algebra via Frobenius, or equivalently,  that $R$ is a solid $R^p$-algebra. Similarly conditions (4) and (5) deal with the solidity of $R$ over $R^{p^e}$. The theorem states that if $R$ is a  domain whose fraction field is F-finite, then $R$ is a solid algebra via Frobenius if and only if $R$ is excellent.

\begin{remark}
It is worth emphasizing that the generic F-finite assumption in Theorem \ref{F-finiteExcellent} is  essential. 
Fix a field $k$ of characteristic $p$ such that $ [k : k^p ] = \infty$. 
Then   $R = k[x_1, . . . , x_n] $ is an excellent domain that
is not F-finite; in this case $F_*^eR$ is a free $R$-module so  there are many non-zero  maps in $
Hom_R(F^e_*R, R)$.
\end{remark}

\begin{remark}
There are many applications of the module  $ {\text{Hom}}_R(F_*^eR, R)$ which  motivate its study more generally.
Schwede was the first to apply it to the test ideal  in \cite{Schw09} and \cite{Schw10}, but the $R$-module $ {\text{Hom}}_R(F_*^eR, R)$  plays a role in many related stories in birational geometry in characteristic $p$. For example, under suitable hypothesis including F-finiteness, the module ${\text{Hom}}_R(F_*^eR, R)$ can be identified with the global sections of the sheaf $F_*^e\mathcal O_X( (1 - p^e)K_X )$ on $X = $ Spec $R$. Each section of this sheaf can be identified with a $\mathbb Q$-divisor $\Delta$ on Spec $R$ such that $K_X+\Delta$ is $\mathbb Q$-Cartier. This idea is applied to understanding log-Fano varieties in prime characteristic in \cite{SchwSmit10}. Many  other applications are described in \cite{SchweTuck12} and \cite{BlickSchw13}.

It is also worth pointing out for the experts in tight closure that for a local F-finite ring $R$, the uniformly F-compatible ideals defined in terms of the module $ {\text{Hom}}_R(F_*^eR, R)$ can be interpreted as dual to the $\mathcal F(E)$-submodules of $E$ (where $E$ is an injective hull of the residue field and $\mathcal F(E)$ is the ring of all Frobenius operators on it) studied in \cite{LyuSmi01}, the largest of which is the tight closure of zero. The dual characterization used by Schwede to define the test ideal was first carried out in the Gorenstein case in \cite{Smi95} and \cite{Smi97}. 
\end{remark}

The proof of Theorem \ref{F-finiteExcellent} requires the following lemma, which is independent of the characteristic.

\begin{lemma}\label{lem1}
 Let  $R \overset{f}\rightarrow S$  be an injective ring homomorphism of Noetherian domains
such that the induced map of fraction fields $\operatorname{Frac}(R) \hookrightarrow \operatorname{Frac}(S)$ is finite. If the canonical
map
$$S \rightarrow {\operatorname{Hom}}_R({\operatorname{Hom}}_R(S,R),R)$$
is injective, then f is also a finite map.
 \end{lemma}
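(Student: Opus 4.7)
The plan is to show that the double dual $\operatorname{Hom}_R(\operatorname{Hom}_R(S,R),R)$ is a finitely generated $R$-module. Combined with the hypothesis that $S$ embeds into it and the Noetherianity of $R$, this forces $S$ to be a finitely generated $R$-module, which is exactly what it means for $f$ to be finite.

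The heart of the argument is to bound $\operatorname{Hom}_R(S,R)$ inside $R^n$, where $n = [\operatorname{Frac}(S):\operatorname{Frac}(R)]$. Set $K = \operatorname{Frac}(R)$ and $L = \operatorname{Frac}(S)$. First I would show that $S \otimes_R K = L$. The inclusion $S \otimes_R K = S[R \setminus \{0\}]^{-1} \hookrightarrow L$ is automatic; for the reverse, given any nonzero $t \in S$, the element $1/t \in L$ satisfies a minimal polynomial over $K$ (since $L/K$ is algebraic), and multiplying through by a suitable power of $t$ and clearing denominators of the coefficients displays $1/t$ as an element of $S[R \setminus \{0\}]^{-1}$. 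Next I would pick a $K$-basis $s_1, \dots, s_n$ of $L$ lying in $S$ (possible after scaling any $K$-basis by elements of $R \setminus \{0\}$ to clear denominators). Any $\psi \in \operatorname{Hom}_R(S,R)$ base-changes to a $K$-linear map $\psi_K \colon L \to K$, which is determined by the values $\psi_K(s_i) = \psi(s_i) \in R$; and since $S$ injects into $S \otimes_R K = L$, the original $\psi$ is recovered by restricting $\psi_K$. This yields an $R$-linear injection
$$\operatorname{Hom}_R(S,R) \hookrightarrow R^n, \qquad \psi \mapsto (\psi(s_1), \dots, \psi(s_n)),$$
so $\operatorname{Hom}_R(S,R)$ is a finitely generated $R$-module by Noetherianity of $R$.

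The same evaluation trick then finishes the argument: choosing generators $\phi_1, \dots, \phi_k$ of the now finitely generated module $\operatorname{Hom}_R(S,R)$, the evaluation map
$$\operatorname{Hom}_R(\operatorname{Hom}_R(S,R),R) \hookrightarrow R^k, \qquad \Psi \mapsto (\Psi(\phi_1), \dots, \Psi(\phi_k))$$
is injective, so the double dual is itself finitely generated over $R$. The hypothesized injection $S \hookrightarrow \operatorname{Hom}_R(\operatorname{Hom}_R(S,R), R)$ then realizes $S$ as a submodule of a finitely generated module over the Noetherian ring $R$, so $S$ is finitely generated over $R$. The only subtle point is establishing $S \otimes_R K = L$, and this is exactly where finiteness of the fraction field extension is used; without that hypothesis, $\operatorname{Hom}_R(S,R)$ can be uncontrollably large, as the example $R = \mathbb{Z} \subset \mathbb{Z}[x] = S$ already shows.
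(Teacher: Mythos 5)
Your proof is correct and follows essentially the same strategy as the paper's: pick a $\operatorname{Frac}(R)$-basis of $\operatorname{Frac}(S)$ lying in $S$, use it to embed $\operatorname{Hom}_R(S,R)$ into $R^n$, and conclude by Noetherianity that the double dual, and hence its submodule $S$, is a finitely generated $R$-module. The only cosmetic difference is that the paper realizes the embedding $\operatorname{Hom}_R(S,R)\hookrightarrow R^n$ by dualizing the short exact sequence $0 \to T \to S \to S/T \to 0$ (with $T$ the free submodule on the chosen basis and $S/T$ torsion), whereas you realize the same embedding directly via base change to $\operatorname{Frac}(R)$; the extra work you do verifying $S\otimes_R \operatorname{Frac}(R)=\operatorname{Frac}(S)$ and the finite generation of the double dual is fine but is subsumed in facts the paper simply cites or states.
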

 
 \begin{proof}
Note that if $M$ is a finitely generated $R$-module, then also  ${\text{Hom}}_R(M,R)$  is finitely generated. Thus, the
lemma follows by Noetherianity if we can show that
${\text{Hom}}_R(S,R)$
is a finitely generated $R$-module.
Let $n$ be the degree of the field extension $\operatorname{Frac}(S)/\operatorname{Frac}(R). $ Then there exists a basis
$x_1, \dots, x_n$ of $ \operatorname{Frac}(S) $ over $\operatorname{Frac}(R)$ such that $x_i \in S$ \cite[5.1.7]{AM69}.

Let $ T$ be the free $ R$-submodule of $S$
generated by the $x_i$. It is clear that $S/T$ is a torsion $R$-module. Then applying  ${\text{Hom}}_R(-,R)$
to the short exact sequence
$$0 \rightarrow T \rightarrow S \rightarrow S/T \rightarrow 0 $$
we get the exact sequence
 $$0 \rightarrow {\text{Hom}}_R(S/T, R) \rightarrow {\text{Hom}}_R(S, R) \rightarrow {\text{Hom}}_R(T, R).$$
Since $S/T$ is a torsion $ R$-module and $R$ is a domain, ${\text{Hom}}_R(S/T,R) = 0. $ Thus, ${\text{Hom}}_R(S,R)$
is a submodule of  ${\text{Hom}}_R(T,R),$ which is free of rank $n$. But $R$ is a Noetherian ring, and so ${\text{Hom}}_R(S,R)$ is also finitely generated.
\end{proof}

A necessary condition for the injectivity of  $S \rightarrow  \operatorname{Hom}_R(\operatorname{Hom}_R(S,R),R) $ in the situation
of the previous lemma is for the module $\operatorname{Hom}_R(S,R)$ to be non-trivial.  
If only the non-triviality of this module is assumed, injectivity of $S \rightarrow  \operatorname{Hom}_R(\operatorname{Hom}_R(S,R),R) $ follows
for a large class of examples as shown in the following result:

\begin{proposition}
\label{prop2}
{Let $ R \overset{f}\hookrightarrow S$ be an injective ring homomorphism of arbitrary domains such that the induced map $\operatorname{Frac}(R) \hookrightarrow  \operatorname{Frac}(S)$ is algebraic. If $S$ is a solid $R$-algebra, then the canonical map $S \rightarrow \operatorname{Hom}_R(\operatorname{Hom}_R(S,R),R)$ is injective. If, in addition, $R$ and $S$  are Noetherian and $f$ is generically finite, then $f$ is a finite map.}

\end{proposition}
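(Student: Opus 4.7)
The plan is to prove the first assertion directly by exhibiting, for any $0 \neq s \in S$, an $R$-linear map $\psi \colon S \to R$ with $\psi(s) \neq 0$; this forces the canonical evaluation map $S \to \operatorname{Hom}_R(\operatorname{Hom}_R(S,R),R)$ to be injective. The second assertion will then follow immediately by feeding this injectivity into Lemma \ref{lem1}.

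To construct $\psi$, I start with the given nonzero $R$-linear map $\phi \colon S \to R$ witnessing the solidity of $S$, and choose any $t \in S$ with $\phi(t) \neq 0$. The key input is the algebraicity of $\operatorname{Frac}(S)$ over $\operatorname{Frac}(R)$: since $s$ is algebraic over $\operatorname{Frac}(R)$ and nonzero, its minimal polynomial has nonzero constant term, so after clearing denominators we obtain an equation
\[
a_n s^n + a_{n-1} s^{n-1} + \dots + a_1 s + a_0 = 0
\]
in $S$ with $a_i \in R$ and $a_0 \neq 0$. Setting $u := -(a_n s^{n-1} + \dots + a_1) \in S$, this equation becomes $s u = a_0 \in R \setminus \{0\}$.

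Now define $\psi \colon S \to R$ by $\psi(x) := \phi(u t \cdot x)$. Since multiplication by $ut \in S$ is an $R$-linear endomorphism of $S$ and $\phi$ is $R$-linear, $\psi$ is an $R$-linear map into $R$. Evaluating at $s$ yields
\[
\psi(s) \;=\; \phi(uts) \;=\; \phi(a_0 t) \;=\; a_0 \phi(t),
\]
which is nonzero because $R$ is a domain and both $a_0$ and $\phi(t)$ are nonzero. Hence the image of $s$ under the canonical map $S \to \operatorname{Hom}_R(\operatorname{Hom}_R(S,R),R)$ is not zero, proving injectivity.

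For the second statement, if in addition $R$ and $S$ are Noetherian and $f$ is generically finite, then the injectivity just established is exactly the hypothesis of Lemma \ref{lem1}, which immediately gives that $f$ is finite. The only real content of the argument is the trick of multiplying by the ``cofactor'' $u$ coming from the algebraic relation to land inside $R$; I do not anticipate any serious obstacle, since everything else is formal $R$-linearity together with the domain hypothesis.
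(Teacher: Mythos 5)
Your proof is correct and takes essentially the same approach as the paper's: both use the algebraic relation for $s$ to produce a ``cofactor'' in $S$ whose product with $s$ lands in $R\setminus\{0\}$, and then precompose the solidity witness $\phi$ with multiplication by that cofactor. The only cosmetic difference is that the paper first normalizes $\phi$ (by precomposing with multiplication by $t$) so that $\phi(1)\neq 0$, whereas you carry $t$ along explicitly; this changes nothing of substance.
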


\begin{proof}
{
By non-triviality of $\operatorname{Hom}_R(S,R)$, there exists an $R$-linear map $S\overset{\phi} \rightarrow R$ such that $\phi(1) \neq  0$, and so, for all non-zero $r \in R$, $\phi(r) = r\phi(1) \neq 0$. For the injectivity of 
$$S \rightarrow  \operatorname{Hom}_R(\operatorname{Hom}_R(S,R),R),$$
it suffices to show that for each non-zero  $s \in S$, there exists $\varphi \in \operatorname{Hom}_R(S, R)$ such that $\varphi(s)\neq 0$. Now since $x$ is algebraic over $\operatorname{Frac}(R)$, there exists $\sum_{i = 0}^n a_iT^i \in R[T]$ such that $a_0 \neq 0$, and 
$$a_ns^n + a_{n-1}s^{n-1} + \dots a_1s + a_0 = (a_ns^{n-1} + a_{n-1}s^{n-2} + \dots + a_1)s + a_0 =  0.$$ 
Suppose $\ell_{\lambda}$ is left multiplication by $\lambda$, where $\lambda := a_ns^{n-1} + a_{n-1}s^{n-2} + \dots + a_1 \in S$. Then $\phi \circ \ell_{\lambda} \in \operatorname{Hom}_R(S, R)$, and
$$\phi \circ \ell_{\lambda}(s) = \phi (-a_0) = -a_0\phi(1) \neq 0,$$
which proves injectivity of $S \rightarrow  \operatorname{Hom}_R(\operatorname{Hom}_R(S,R),R)$.}

{If $R \overset{f} \rightarrow S$ is a generically finite map of Noetherian domains, then $f$ is a finite map by Lemma \ref{lem1} and what we just proved.}
\end{proof}

\begin{remark}
As a special case of Proposition \ref{prop2}, we obtain the following result: Let $R$ be any domain and $K$ be any field containing $R$. If the integral closure $\overline{R}$ of $R$ in $K$ is a solid $R$-algebra, then the canonical map $\overline{R} \rightarrow \operatorname{Hom}_R(\operatorname{Hom}_R(\overline{R}, R), R)$ is injective. In particular, a Noetherian domain $R$ is Japanese precisely when the integral closure of $R$ in any finite extension of its fraction field is a solid $R$-algebra.
\end{remark}


\begin{proof}[Proof of Theorem \ref{F-finiteExcellent}]
We already know (1) and (2) are equivalent from Theorem \ref{ExcF-finite}. 

{For (2) implies (3), assume $F_*R$ is a finitely generated $R$-module. Let $K$ be the fraction field of $R$, and denote by $F_*K$ the fraction field of $F_*R$, again emphasizing the K-vector space structure via Frobenius. Note $F_*K = F_*R \otimes_R K$. Since
$$\operatorname{Hom}_R(F_*R, R) \otimes_R K \cong \operatorname{Hom}_K(F_*K, K) \neq 0,$$
it follows that $\operatorname{Hom}_R(F_*R, R) \neq 0$.}


We now show (3) implies (4). If $\operatorname{Hom}_R(F_*R, R)$ is non-trivial, then there exists $\phi: F_*R \rightarrow R$ such that
$$\phi(1) = c \neq 0.$$
By induction, suppose there exists $\varphi \in \operatorname{Hom}_R(F^{e-1}_*R, R)$ such that $\varphi(1) \neq 0$. Then the $p^{-e}$-linear map
$$F^e_*R \xrightarrow{F^{e-1}_*(\phi)} F^{e-1}_*R \xrightarrow{\varphi} R$$
maps $c^{(p^{e-1} -1)p} \mapsto c\varphi(1) \neq 0$, showing that $\operatorname{Hom}_R(F^e_*R, R)$ is non-trivial.

 Obviously, (4) implies (5). 
We finish the proof by proving that (5) implies  (2).  By assumption, $F^e_*K$  is a finite extension of K. We now apply Proposition \ref{prop2}, taking 
 taking $S = F^e_*R$ and $f = F^e$. The proposition implies that  $F^e$ is a finite map. Thus, also $F$ is a finite map, and we have proved (5) implies (2).
\end{proof}

\begin{corollary}
If $R$ is a non-excellent domain of characteristic $p > 0$ which is generically $F$-finite, then  $Hom(F_*^eR, R) = 0$ for all $e \in \mathbb{N}$.
\end{corollary}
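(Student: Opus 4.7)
The plan is to deduce this corollary directly as the contrapositive of the equivalence (1) $\Leftrightarrow$ (5) in Theorem \ref{F-finiteExcellent}. First I would verify that the hypotheses align: the assumption that $R$ is ``generically $F$-finite'' is precisely the requirement of Theorem \ref{F-finiteExcellent} that $\operatorname{Frac}(R)$ be $F$-finite, and ``non-excellent domain'' must be read as ``Noetherian domain that fails to be excellent,'' since excellence is only defined in the Noetherian setting. So the full set of standing hypotheses of the theorem is in force.

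Once this is in place, the argument is immediate. Theorem \ref{F-finiteExcellent} asserts that for a generically $F$-finite Noetherian domain $R$, excellence is equivalent to the existence of some $e > 0$ with $\operatorname{Hom}_R(F_*^eR, R) \neq 0$. Taking the contrapositive, if $R$ is not excellent, then no such $e$ exists; that is, $\operatorname{Hom}_R(F_*^eR, R) = 0$ for every $e \geq 1$. The case $e = 0$ is to be read as excluded, since $F_*^0 R = R$ and $\operatorname{Hom}_R(R,R) = R$, so the intended meaning of ``for all $e \in \mathbb{N}$'' is ``for all positive $e$.'' There is no genuine obstacle here: the corollary is a repackaging of the most striking implication of the theorem, emphasizing that in the generically $F$-finite setting the failure of excellence forces \emph{every} one of these Hom modules to vanish at once, not merely some of them.
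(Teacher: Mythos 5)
Your proposal is correct and matches the paper's intent exactly: the corollary is stated without proof precisely because it is the contrapositive of the implication $(5)\Rightarrow(1)$ (equivalently, $(1)\Rightarrow(5)$) in Theorem \ref{F-finiteExcellent}, with the standing Noetherian and generic $F$-finiteness hypotheses carried over. Your side remarks on reading ``non-excellent domain'' as implicitly Noetherian and on excluding $e=0$ are reasonable glosses, but no additional argument is needed.
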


Basically, this corollary means that we can not expect to develop a theory of test ideals for non-excellent rings, at least, not a theory that uses the ideas of uniform $F$-compatibility.

\section{Examples of non-excellent rings}
\label{examplesnon-excellent}

Given that the class of excellent rings is so large,  it is natural to wonder how one can possibly find  natural classes of examples of non-excellent rings.
The next theorem gives one  source.

\begin{theorem}\label{exDivisorial}
Let $K$ be a field of  characteristic $p$ such that $[K:K^p] < \infty.$  For any discrete valuation ring $V$ of $K$, the following are equivalent:
\begin{enumerate}
\item $V$ is excellent;
 \item $V$ is F-finite;
 \item $V$ is Frobenius split.
\end{enumerate}

{Moreover, if $K$ is a function field over a ground field $k$, and $V$ is a discrete valuation ring of $K/k$, then (1)-(3) are equivalent to $V$ being a divisorial valuation ring of $K/k$.}
\end{theorem}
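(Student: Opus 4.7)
The plan is to establish (1)$\Leftrightarrow$(2)$\Leftrightarrow$(3) directly from the tools developed earlier in the paper, and then handle the divisorial characterization as a separate geometric matter.

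The equivalence (1)$\Leftrightarrow$(2) is immediate: $V$ is a Noetherian domain and its fraction field $K$ is F-finite by hypothesis, so Theorem \ref{ExcF-finite} applies verbatim. For (2)$\Rightarrow$(3), I would invoke Kunz's flatness criterion: since $V$ is regular, F-finiteness of $V$ forces $F_*V$ to be flat and finitely generated over the local ring $V$, hence free. The element $1 \in F_*V$ has nonzero image in $F_*V/\mathfrak{m}F_*V$, so Nakayama extends it to a free basis of $F_*V$, and projection onto the $V$-summand spanned by $1$ furnishes a Frobenius splitting. For (3)$\Rightarrow$(2), any Frobenius splitting is a nonzero element of $\operatorname{Hom}_V(F_*V,V)$, so Theorem \ref{F-finiteExcellent} applied to the Noetherian domain $V$ immediately gives F-finiteness.

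For the additional equivalence with divisoriality in the function-field setting, the easy direction is divisorial$\Rightarrow$(2): if $V=\mathcal{O}_{X,\eta}$ for a normal model $X$ of finite type over $k$ and $\eta$ a codimension-one point, then F-finiteness of $V$ will follow as soon as $k$ is F-finite. The hypothesis $[K:K^p]<\infty$ forces $[k:k^p]<\infty$ via the standard transcendence-degree formula for finitely generated field extensions. Then $X$ is F-finite as a finite-type scheme over an F-finite ring, and localizing gives the F-finiteness of $V$.

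The main obstacle is the reverse implication (2)$\Rightarrow$divisorial, which is the substantive content drawn from \cite{DatSmi16}. The strategy uses Abhyankar's inequality $\operatorname{trdeg}_k\kappa_V\leq\operatorname{trdeg}_k K-1$, which becomes equality precisely for divisorial DVRs. F-finiteness of $V$ yields the rank identity $[K:K^p]=p\cdot[\kappa_V:\kappa_V^p]$, obtained by computing the rank of the free $V$-module $F_*V$ at the generic and closed points (the factor $p$ accounting for the uniformizer direction, and $[\kappa_V:\kappa_V^p]$ for a lifted $p$-basis of $\kappa_V$). Comparing with the formula $[K:K^p]=p^{\operatorname{trdeg}_k K}[k:k^p]$ and the corresponding one for $\kappa_V/k$ pins down $\operatorname{trdeg}_k\kappa_V=\operatorname{trdeg}_k K-1$, forcing $V$ to be divisorial. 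The delicate step is the transcendence-degree formula applied to $\kappa_V/k$, since $\kappa_V$ is not a priori finitely generated over $k$; justifying its use here is where the main technical effort of \cite{DatSmi16} is concentrated.
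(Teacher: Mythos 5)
Your proof is correct and follows essentially the same route as the paper: (1)$\Leftrightarrow$(2) via Theorem \ref{ExcF-finite}, (2)$\Rightarrow$(3) via Kunz's flatness of Frobenius on the regular local ring $V$ making $F_*V$ free, (3)$\Rightarrow$(2) by feeding the splitting into Theorem \ref{F-finiteExcellent}, and the divisorial equivalence deferred to \cite{DatSmi16}. The only difference is that you additionally sketch the rank-counting argument $[K:K^p]=p\,[\kappa_V:\kappa_V^p]$ behind the cited \cite[Corollary 6.6.3]{DatSmi16}, whereas the paper simply cites it; your sketch is accurate and you correctly flag that justifying $[\kappa_V:\kappa_V^p]=p^{\operatorname{trdeg}_k\kappa_V}[k:k^p]$ when $\kappa_V/k$ need not be finitely generated is where the real work lives.
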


Recall that a {\bf divisorial valuation ring}  of $K$ is one that obtained as the local ring along some  prime divisor of a normal model of $K/k$. In particular, if  $K/k$ is a function field of transcendence degree $d$ over $k$, then any divisorial valuation ring of $K$ has residue field  of transcendence degree $d-1$ over $k$.

\begin{proof}
{The equivalence of (1) and (2) is a straightforward consequence of Theorem \ref{ExcF-finite}. For the proof of (2) $\Rightarrow$ (3), we use the fact that Frobenius is flat for a regular local ring \cite[Theorem 2.1]{Kunz69}. In particular, when $V$ is F-finite, flatness implies that $F_*V$ is a free $V$-module, which gives a splitting of the Frobenius map. Conversely, a splitting of Frobenius gives the non-triviality of $\operatorname{Hom}_V(F_*V, V)$. Then $V$ is F-finite by Theorem \ref{F-finiteExcellent}.}

{Finally, (1)-(3) is equivalent to $V$ being divisorial when it is a discrete valuation ring of a function field $K/k$ by \cite[Corollary 6.6.3]{DatSmi16}.} 
\end{proof}

\begin{remark}
The paper \cite{DatSmi16},  with corrections in \cite{DatSmi17},
shows more generally that a (not necessarily Noetherian) valuation  ring with F-finite function field will {\it always} be divisorial if it is  F-finite; see \cite[Thm 0.1]{DatSmi17}.
Thus, in the class of valuation rings of an F-finite  function field, F-finiteness  implies Noetherian. 
\end{remark}

\subsection{Some Non-excellent DVRs}
\label{examples} Let $k$ be the algebraic closure of the finite field $\mathbb F_p,$ and fix 
 $K = k(x, y)$,  the function field of $\mathbb P^2_{k}$.  For concreteness, we consider discrete valuations of $K$ centered at the origin (the point defined by the ideal $(x,y)$). The reader will immediately observe that our technique generalizes to any function field over $k$.

Choose any non-unit power series  $p(t) \in k[[t]]$ which is {\it not algebraic } over the subfield  $ k(t) $ of $k((t))$.
Note that such $p(t)$ are abundant: the field $k(t)$ is  countable, hence so its algebraic closure, whereas  $k[[t]]$ is uncountable (consisting of all infinite sequences of elements in $k$).
Thus, there are uncountably many different choices of non-unit power series $p(t)$ non-algebraic over $k(t)$.

Consider the ring homomorphism
$$
k[x, y]\hookrightarrow   k[[t]]
$$
obtained by sending $x\mapsto t$ and $y \mapsto p(t)$. Our assumption on  $p(t)$ ensures this map is injective.
Consider the induced inclusion of fraction fields
$$
 k(x, y)\hookrightarrow k((t)).
$$
The standard $t$-adic valuation on $ k((t))$ restricts to some discrete valuation on $  k(x, y)$  which takes the value 1 on $x$. Its 
valuation ring is 
$V_p =  k [[t]] \cap  k(x, y)$, whose maximal ideal is generated by any element of minimal non-zero value, such as $x$.
We have a local map of local rings
$$
V_p \hookrightarrow   k [[t]] 
$$ 
in which the maximal ideal of $ k [[t]] $ obviously contracts to the maximal ideal of $V_p.$  In particular, the residue field of $V_p$ satisfies
$$ k \hookrightarrow V_p/\frak{m}_{V_p}  \hookrightarrow k [[t]]/(t) \cong k.
$$
Hence, the residue field of $V_p$ is $k$, which has transcendence degree zero over $k$. This means that the {discrete} valuation ring $V_p$ {of $k(x, y)/k$} is {\it{not divisorial}}. 


Moreover, by Theorem \ref{exDivisorial}, because $V_p$ is not divisorial, it is neither excellent nor F-finite.   This gives us examples of non-excellent regular local rings of dimension 1, whose fraction field is $k(x,y)$.

\begin{remark}
Since one can similarly embed $k(x_1, \dots, x_n)$ in $k((t))$ for any $n \geq 2$, our method easily generalizes to produce examples of non-divisorial DVRs in the function field of $\mathbb P^n_k$, for all $n \geq 2$ and $k$ of characteristic $p$. These can be extended to non-divisorial DVRs on the function field of any variety over $k$.
\end{remark}

\medskip{The above} construction shows that there are many more non-excellent DVRs than excellent ones. For example, among DVRs of $\mathbb P^2_k$, we have: 

\begin{corollary}
Let $K= k(x, y)$, where $k $ is the algebraic closure of $\mathbb F_p$.  The  set of all discrete valuation rings of $K/k$ is an uncountable set, with the excellent ones among them forming a countable subset.
\end{corollary}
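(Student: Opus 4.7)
The proof splits into two parts: showing the set of DVRs of $K/k$ is uncountable, then showing the excellent ones form a countable subset.

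\textbf{Uncountability.} My plan is to argue that the construction of Subsection \ref{examples} gives an injection from the uncountable set of non-unit, non-algebraic power series $p(t) \in k[[t]]$ to the set of DVRs of $K/k$. Given such a $p$, the DVR $V_p$ has residue field $k$ and uniformizer $x$; since $k$ is perfect, Cohen's structure theorem canonically identifies $\widehat{V_p}$ with $k[[x]]$. On the other hand, the embedding $\iota_p : k(x,y) \hookrightarrow k((t))$, $x \mapsto t$, $y \mapsto p(t)$, sends $V_p$ into $k[[t]]$ and extends continuously to a $k$-algebra map $\widehat{V_p} \to k[[t]]$ whose image is dense and complete, hence is an isomorphism, carrying $x \mapsto t$. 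Comparing the two identifications of $\widehat{V_p}$, the element $y$ corresponds to $p(x)$ under $\widehat{V_p} \cong k[[x]]$, which recovers $p$ from $V_p$. Since Subsection \ref{examples} explains that there are uncountably many non-unit, non-algebraic $p \in k[[t]]$ (as $k$ and the algebraic closure of $k(t)$ inside $k((t))$ are countable but $k[[t]]$ is not), this produces uncountably many distinct DVRs $V_p$ of $K/k$.

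\textbf{Countability of excellent DVRs.} By Theorem \ref{exDivisorial}, an excellent DVR of $K/k$ is precisely a divisorial valuation ring, i.e., one of the form $\mathcal{O}_{X, D}$ for a prime divisor $D$ on some normal model $X$ of $K/k$. I plan to enumerate such pairs $(X, D)$. Because $k = \overline{\mathbb F_p}$ is countable, so is $K = k(x,y)$; any affine normal model of $K/k$ is specified by a finitely generated $k$-subalgebra of $K$, and there are only countably many such subalgebras (parametrized by finite subsets of the countable set $K$). Gluing finitely many affine models yields only countably many normal models in total. Each such model is a finite-type integral $k$-scheme with countable underlying set, hence has only countably many prime divisors. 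Therefore the set of pairs $(X, D)$ is countable, and so is the collection of excellent DVRs of $K/k$.

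The main content is the injectivity argument in the uncountability step, which rests on Cohen's structure theorem for complete DVRs with perfect residue field; the countability step is then a direct cardinality calculation exploiting the fact that $\overline{\mathbb F_p}$ is countable.
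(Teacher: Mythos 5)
Your proof is correct, and the countability half is essentially the paper's own argument (the paper counts pairs $(R,\mathfrak p)$ with $R$ a finitely generated normal $k$-subalgebra of $K$ and $\mathfrak p$ a height-one prime; you count pairs $(X,D)$ of a normal model and a prime divisor, which reduces to the same thing since every divisorial valuation ring is a localization of an affine chart — the remark about gluing is a slight detour you don't actually need).

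The uncountability half is where you genuinely diverge from the paper, and the comparison is interesting. The paper proves injectivity of $p\mapsto V_p$ completely explicitly: if $p(t)=\sum a_nt^n$ and $q(t)=\sum b_nt^n$ first differ in degree $i$, then the element $x^i/\bigl(y-(a_0+a_1x+\cdots+a_ix^i)\bigr)$ lies in $V_q$ but not in $V_p$. Your argument instead recovers $p(t)$ intrinsically from $V_p$: since $V_p$ has perfect residue field $k$ and uniformizer $x$, the coefficient field of $\widehat{V_p}$ is unique (in equal characteristic $p$ with perfect residue field the coefficient field is canonically the image of $k$), giving a canonical identification $\widehat{V_p}\cong k[[x]]$; on the other hand $V_p/(x^n)\to k[t]/(t^n)$ is an isomorphism for every $n$, so $\widehat{V_p}\cong k[[t]]$ via $\iota_p$, and chasing $y$ through both identifications shows $y\mapsto p(x)$. (You phrase the second isomorphism as ``dense image plus complete,'' which gives surjectivity; injectivity deserves a word — it follows immediately because $\widehat{V_p}$ is a DVR and $x^n\mapsto t^n\neq 0$ for all $n$, or because $(t^n)\cap V_p=(x^n)$.) Your route is conceptually cleaner and explains \emph{why} distinct $p$ give distinct rings — the power series is a complete invariant, not merely separated by an ad hoc test element — at the modest cost of invoking Cohen's structure theorem and uniqueness of coefficient fields. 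Both arguments are sound; the paper's is more elementary and self-contained.
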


\begin{proof}
We first show that our construction above already  gives  {\it uncountably many}  non-excellent valuation rings in $k(x,y)$ over $k$. We have already observed that there are uncountably many different choices of the power series $p(t)$ {giving a homorphism of fields
$$k(x, y) \hookrightarrow k((t))$$
that maps $x \mapsto t$ and $y \mapsto p(t)$. Each such homomorphism then gives a discrete valuation
\begin{equation}
\label{def of trans. valuation}
v_{p(t)}: k(x,y)^{\times} \hookrightarrow k((t))^{\times} \xrightarrow{t- adic} \mathbb Z,
\end{equation}
whose associated valuation ring is a non-excellent discrete valuation ring. We now claim that each choice of $p(t)$ yields a {\it different} valuation ring of $k(x, y)/k$.

Let $p(t) = \Sigma_{n \geq 0} a_nt^n$ and $q(t) = \Sigma_{n \geq 0} b_nt^n$ be two different power series, and let $i \in \mathbb N \cup \{0\}$ be the smallest integer such that $a_i \neq b_i$. From the definitions of $v_{p(t)}$ and $v_{q(t)}$ (see \ref{def of trans. valuation}) we get
$$v_{p(t)}(y - (a_0 + a_1x + \dots +  a_ix^i)) > i \hspace{2mm} \operatorname{and} \hspace{2mm} v_{q(t)}(y - (a_0 + a_1x + \dots + a_ix^i)) = i.$$
Thus the fraction 
$$
\frac{x^i}{y-(a_0 + a_1x + \dots + a_ix^i)}
$$ is in the valuation ring for the valuation $v_{q(t)}$ but not  the one for 
$v_{p(t)}$, showing that each choice of power series $p(t)$ gives rise to a distinct valuation ring of 
$k(x, y)$.  This completes the proof that 
$k(x,y)/k$ has uncountably many non-excellent DVRs. } 

{On the other hand, let us show more generally that for any countable algebraically closed field $k$ of characteristic $p$, and any function field $K$ of $k$, the set  divisorial valuation rings of $K/k$ is countable. Note that any such valuation ring is the localization of a finitely generated, normal $k$-subalgebra $R$ of $K$ at a height $1$ prime. Observe that being a finitely generated field extension of a countable field, $K$ itself is also countable.  Thus the collection of all finitely generated $k$-subalgebras $R$ of $K$ is countable.
Any such $R$ is itself countable, and since every ideal of $R$ is finitely generated,  the set of ideals of $R$ is countable. This clearly implies countability of the collection $S$ of pairs $(R, p)$, where $R$ is a finitely generated, normal $k$-subalgebra of $K$ with fraction field $K$ and height one prime $p$, completing the proof.}
\end{proof}

To summarize: randomly choosing a discrete valuation ring in  $k(x,y)/k$, we expect it to be {\it non-excellent} since there are only countably many excellent valuation rings. 
Equivalently, there are only countably many F-finite {discrete} valuation rings in $k(x,y)/k$, namely the same ones which are excellent.

\begin{remark}
See also \cite[Chapter VI]{ZarSam}, \cite{Ben73} and \cite[Example 8.2.31]{Liu06} where these types of rings are discussed. In particular, \cite{Liu06} gives a different argument for the failure of the Japanese property
in a specific case.
\end{remark}


\bibliographystyle{amsplain}
\bibliography{BibliographyEinNov19}

\end{document}